\newcommand{\Q}{\Bbb Q}
\newcommand{\Z}{\Bbb Z}
\newcommand{\ga}{{\rm Gal}}
\newcommand{\ca}[1]{{\mathcal #1}}
\newcommand{\lr}[1]{\langle #1 \rangle}
\newcommand{\fulltoday}{\number\day\space \ifcase\month\or
    January\or February\or March\or April\or May\or June\or
    July\or August\or September\or October\or November\or December\fi
    \space\number\year}
\theoremstyle{plain}
\newtheorem{thm}{\indent\bf Theorem}
\newtheorem{lem}{\indent\bf Lemma}
\newtheorem*{thmA}{\indent\bf Theorem A}
\theoremstyle{definition}
\newtheorem*{ack}{\indent\bf Acknowledgments}
\newtheorem*{defi}{\indent\bf Definition}
\begin{document}
\title{
On automorphisms of some semidirect product groups and ranks of Iwasawa modules
}

\author{
Satoshi Fujii\thanks{
Faculty of Education, Shimane University, 
1060 Nishikawatsucho, Matsue, Shimane, 690--8504, Japan. 
e-mail : {\tt fujiisatoshi@edu.shimane-u.ac.jp}, 
{\tt mophism@gmail.com}
}
}
\date{
}
\maketitle

\begin{abstract}
Let $p$ be an odd prime number and $k$ an imaginary quadratic field in which $p$ does not split. 
Based on their heuristic, 
Kundu and Washington posed a question which asks whether $\lambda$- and $\mu$-invariant of 
the anti-cyclotomic $\Z_p$-extension $k_{\infty}^a$ of $k$ are always trivial. 
Also, 
if $k_{\infty}^a/k$ is totally ramified, 
for $n\geq 1$, 
they showed that the $p$-part of the ideal class group of the 
$n$th layer of the anti-cyclotomic $\Z_p$-extension of $k$ is not cyclic. 
In this article, 
inspired by their paper, 
we study anti-cyclotomic like $\Z_p$-extensions, 
extending both the above question and 
Kundu-Washington's result. 
We show that the values of $\lambda$ of certain anti-cyclotomic like 
$\Z_p$-extensions are always even. 
We also show the $p$-part of the ideal class groups of certain 
anti-cyclotomic like $\Z_p$-extensions of CM-fields are always not cyclic. 
\end{abstract}
\footnote[0]{
2000 \textit{Mathematics Subject Classification}. 
Primary : 11R23. 
}

\section{Introduction}

Let $p$ be a fixed prime number. 
A Galois extension is called a $\Z_p$-extension 
if its Galois group is topologically isomorphic to the additive group of the ring of 
$p$-adic integers $\Z_p$. 
Let $k/\Q$ be a finite extension and $K/k$ a $\Z_p$-extension. 
For each non-negative integer $n$, 
there is the unique intermediate field $k_n$, 
called the $n$th layer, 
of $K/k$ such that $[k_n:k]=p^n$. 
For a finite extension $F/\Q$, 
let $A_F$ denote the $p$-part of the ideal class group of $F$. 
For a $\Z_p$-extension $K/k$, 
let $X_K$ denote the Galois group of the maximal unramified abelian pro-$p$ extension 
$L_K/K$. 
The module $X_K$ is also defined to be the projective limit $\varprojlim_n A_{k_n}$ with respect to norm maps. 
By Iwasawa's class number formula, 
there are non-negative integers $\lambda$, $\mu$ and an integer $\nu$ depending only on $K/k$ 
such that $\# A_{k_n}=p^{\lambda n+\mu p^n+\nu}$ for all sufficiently large $n$. 
The integers $\lambda$ and $\mu$ are structure invariants of $X_K$ as a Galois module. 
In particular, 
it is known that 
$\lambda=\dim_{\Q_p}\Q_p\otimes_{\Z_p}X_K$, 
and that $\lambda=\mu=0$ if and only if $X_K$ is finite. 
Here we denote by $\Q_p$ the $p$-adic number field.

Suppose that $p$ is odd. 
For each imaginary quadratic field $k$, 
there is the unique $\Z_p$-extension $k_{\infty}^a/k$ such that 
$k_{\infty}^a/\Q$ is a non-abelian Galois extension. 
The extension $k_{\infty}^a/k$ is called the anti-cyclotomic $\Z_p$-extension of $k$. 
In \cite{Kundu-Washington}, 
based on their heuristic, 
Kundu and Washington posed a question: 
Is $\lambda= \mu = 0$ always true for the anti-cyclotomic 
$\Z_p$-extension of an imaginary quadratic field $k$ in which $p$ does not split?
For each non-negative integer $n$ let $k_n^a$ be the $n$th layer of $k_{\infty}^a/k$. 
Kundu and Washington also showed the following result. 

\begin{thmA}\label{A}
Let $p$ be an odd prime number and $k$ an imaginary quadratic field in which $p$ does not split. 
Suppose that $k_{\infty}^a/k$ is totally ramified at the unique prime above $p$. 
If $A_k\neq 0$ then $A_{k_n^a}$ is not cyclic for $n\geq 1$. 
\end{thmA}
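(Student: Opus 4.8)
\emph{Plan.} First I would reduce to the first layer. Since $k_\infty^a/k$ is totally ramified at the unique prime $\f p$ above $p$, the extension $k_n^a/k_1^a$ is ramified for every $n\ge 1$, so $k_n^a\cap H_{k_1^a}=k_1^a$ (where $H_{k_1^a}$ denotes the $p$-Hilbert class field) and the norm map $A_{k_n^a}\to A_{k_1^a}$ is surjective. Hence it suffices to prove that $A_{k_1^a}$ is not cyclic. Next, genus theory over $k$: put $\Delta=\ga(k_1^a/k)\cong\Z/p\Z$, and let $H_k$ be the $p$-Hilbert class field of $k$. Because $k_1^a/k$ is ramified only at $\f p$ and totally ramified there, the maximal extension of $k_1^a$ that is unramified over $k_1^a$ and abelian over $k$ is exactly $H_k\cdot k_1^a$ (the only prime that can ramify in such an extension of $k$ is $\f p$, and the inertia subgroup there is a complement to the part coming from $H_k$); this yields a natural surjection $A_{k_1^a}\twoheadrightarrow\ga(H_k\cdot k_1^a/k_1^a)\cong A_k$. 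In particular, if $A_k$ is not cyclic we are done, so from now on assume $A_k$ is cyclic and nonzero, and suppose for contradiction that $A_{k_1^a}$ is cyclic.

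Now I would use the anti-cyclotomic (dihedral) structure. Complex conjugation $c\in\ga(k_1^a/\Q)$ satisfies $c\sigma c^{-1}=\sigma^{-1}$ for $\sigma\in\Delta$, and $\ga(k_1^a/\Q)$ acts on $A_{k_1^a}$. If $A_{k_1^a}$ is cyclic then $\mathrm{Aut}(A_{k_1^a})$ is abelian, so the images of $c$ and $\sigma$ commute there; combined with $c\sigma c^{-1}=\sigma^{-1}$ this forces the image of $\sigma^{2}$ to be trivial, and since $\sigma$ has odd order $p$, $\sigma$ acts trivially on $A_{k_1^a}$. Thus in the extension $1\to A_{k_1^a}\to\ga(H_{k_1^a}/k)\to\Delta\to 1$ the cyclic group $A_{k_1^a}$ is acted on trivially, so the middle group is abelian; hence $H_{k_1^a}/k$ is abelian and unramified over $k_1^a$, so $H_{k_1^a}\subseteq H_k\cdot k_1^a$. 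Comparing this with the genus surjection forces $H_{k_1^a}=H_k\cdot k_1^a$ and $A_{k_1^a}\cong A_k$, the genus map being an isomorphism. Passing to the projective limit along the tower, one gets $X_{k_\infty^a}\cong A_k$ with $\ga(k_\infty^a/k)$ acting trivially, i.e. $A_{k_n^a}\cong A_k$ for all $n$.

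The main obstacle is to turn this configuration into a contradiction, and here the ramification at $\f p$ must enter decisively. Because $k_1^a/k$ is totally ramified at $\f p$ while $H_{k_1^a}/k_1^a$ is unramified, an inertia subgroup at a prime above $\f p$ splits $1\to A_{k_1^a}\to\ga(H_{k_1^a}/k)\to\Delta\to 1$, so $\ga(H_{k_1^a}/k)\cong A_{k_1^a}\rtimes\Delta$ and, adjoining $c$, $\ga(H_{k_1^a}/\Q)\cong(A_{k_1^a}\rtimes\Delta)\rtimes\langle c\rangle$ — a semidirect product of precisely the kind the paper's title concerns. In the case at hand $\Delta$ acts trivially, so this is the generalized dihedral group on $A_k\times\Delta$, and the decomposition of $\f p$ in it is pinned down by local class field theory at the unique prime above $p$ together with the fact that $c$ inverts $\ga(k_\infty^a/k)$. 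I expect that tracing the Frobenius and inertia at $\f p$ through this group (equivalently, identifying the relevant feature of the local extension $k^a_{1,\f p}/k_{\f p}$) is incompatible with $A_k\ne 0$; making this incompatibility explicit is where the real work lies.
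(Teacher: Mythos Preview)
Your reduction to $n=1$ is correct, and your argument that cyclicity of $A_{k_1^a}$ forces $\sigma$ to act trivially---via ${\rm Aut}(A_{k_1^a})$ being abelian together with $c\sigma c^{-1}=\sigma^{-1}$---is a clean substitute for the paper's Lemma~\ref{lem1}; from there you correctly deduce $H_{k_1^a}=H_k\cdot k_1^a$ and $\#A_{k_1^a}=\#A_k$. But the proof is genuinely incomplete: you have not produced the contradiction, and your closing paragraph says as much. The paper does not extract it from the group structure of $(A_{k_1^a}\rtimes\Delta)\rtimes\langle c\rangle$ by chasing inertia and Frobenius; rather, it proves the complementary statement $\#A_{k_1^a}>\#A_k$ directly, and this is where the arithmetic (beyond the dihedral symmetry) enters.

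Here is the missing step. Assume $A_{k_1^a}^{G_1}=A_{k_1^a}$; then $\lr{J}$ acts canonically on $A_{k_1^a}$, and the squeeze $\#A_k=\#A_k^-\le\#A_{k_1^a}^-\le\#A_{k_1^a}=\#A_k$ gives $A_{k_1^a}=A_{k_1^a}^-$. From $0\to P_1\to I_1\to C_1\to 0$ and $0\to E_1\to(k_1^a)^\times\to P_1\to 0$ one finds that the cokernel of $(I_1^{G_1})_p^-\to A_{k_1^a}^-$ embeds in $\hat H^0(G_1,E_1)_p^-$, and this vanishes since $(E_k)_p^-=0$ (there are no $p$th roots of unity in $k$). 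Your instinct about $\f p$ is exactly the right input here: because $J(\f p_1)=\f p_1$, the class of $\f p_1$ lies in the plus part, so $(I_1^{G_1})_p^-=(I_0)_p^-$ and the lifting map $\iota\colon A_k=A_k^-\to A_{k_1^a}^-=A_{k_1^a}$ is surjective. But $N\circ\iota$ is multiplication by $p$ on $A_k$, and with both $N$ and $\iota$ surjective this forces the nonzero $p$-group $A_k$ to be trivial---the desired contradiction. (Your passage to the projective limit is unjustified and unnecessary; everything already happens at level~$1$.)
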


In the present article, 
by giving tentatively the following definition, 
we study for more general settings than anti-cyclotomic $\Z_p$-extensions of  imaginary quadratic fields.

\begin{defi}
Let $p$ be an odd prime number. 
Let $F/\Q$ be a finite extension and $k/F$ a quadratic extension. 
A $\Z_p$-extension $k_{\infty}^a/k$ is anti-cyclotomic like with respect to $k/F$ if 
$k_{\infty}^a/F$ is a non-abelian Galois extension. 
\end{defi}

Let $k$ and $F$ be totally real fields such that $k/F$ is a quadratic extension. 
If Leopoldt's conjecture holds true for $p$ and $k$, 
then $k$ has only the cyclotomic $\Z_p$-extension, 
and hence there are no anti-cyclotomic like $\Z_p$-extensions with respect to $k/F$. 
Also, 
let $k$ be a CM-field and $k^+$ the maximal totally real subfield of $k$. 
If $[k:\Q]>2$ then there are infinitely many anti-cyclotomic like $\Z_p$-extensions 
with respect to $k/k^+$. 

Let $k_{\infty}^a/k$ be an anti-cyclotomic like $\Z_p$-extension with respect to a quadratic extension $k/F$. 
Let $J$ be the generator of $\ga(k/F)$. 
Since $k_{\infty}^a/F$ is a non-abelian Galois extension, 
$J$ acts on $\ga(k_{\infty}^a/k)$ as the inverse. 
For each non-negative integer $n$, 
denote by $k_n^a$ the $n$th layer of $k_{\infty}^a/k$. 
The first result of this article 
considering $\lambda$-invariants. 

\begin{thm}\label{1}
Let $p$ be an odd prime number, 
$k/F$ a quadratic extension over a finite extension $F/\Q$ and 
$k_{\infty}^a/k$ an anti-cyclotomic like $\Z_p$-extension with respect to $k/F$. 
Suppose that the prime $p$ does not split in $k_{\infty}^a/\Q$. 
Then we have $\lambda\equiv 0\bmod{2}$.
\end{thm}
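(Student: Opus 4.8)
The plan is to exploit the action of $J$ on the Iwasawa module $X = X_{k_\infty^a}$. Since $k_\infty^a/F$ is non-abelian, $J$ acts on $\Gamma = \ga(k_\infty^a/k)$ by inversion, hence on $\Lambda = \Z_p\llbracket\Gamma\rrbracket$ by the involution $\iota$ sending a topological generator $\gamma$ to $\gamma^{-1}$. Because $J$ normalizes $\ga(L_{k_\infty^a}/k_\infty^a) = X$ inside $\ga(L_{k_\infty^a}/F)$, conjugation by $J$ makes $X$ into a module over the twisted group ring, i.e. $X$ carries a semilinear involution $\sigma$ satisfying $\sigma(fx) = \iota(f)\sigma(x)$ for $f\in\Lambda$, $x\in X$. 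The first step is to record this structure carefully and to note that $\sigma$ descends to the finite levels $A_{k_n^a}$ compatibly.

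Next I would pass to $V = \Q_p\otimes_{\Z_p} X$, a finite-dimensional $\Q_p$-vector space of dimension $\lambda$ (using $\lambda = \dim_{\Q_p}\Q_p\otimes_{\Z_p}X_K$ from the introduction; here $\mu$ plays no role since we tensor with $\Q_p$). The hypothesis that $p$ does not split in $k_\infty^a/\Q$ should force $\mu$-type or decomposition issues aside and, more importantly, guarantee that the arithmetic is ``anti-cyclotomic'' in the relevant sense — I expect it is used to ensure that the plus part of $V$ under $\sigma$ vanishes, or dually that $V$ is entirely a minus eigenspace for the naive involution before semilinearity is accounted for. Concretely, the key claim to prove is that $\sigma$ together with $\iota$ equips $V$ with the structure that forces $\dim_{\Q_p} V$ to be even.

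The mechanism for evenness is the following: $\iota$ acts on $\Lambda$ and hence on $\Q_p\otimes\Lambda$; the semilinear involution $\sigma$ on $V$ and the $\Lambda$-action combine, and one shows that $V$ decomposes as a direct sum of $\sigma$-stable subspaces each of which is a free module of rank one over a quotient ring on which $\iota$ acts by a nontrivial involution fixing a subring of index two — so each such piece has even $\Q_p$-dimension. Alternatively, and perhaps more cleanly, one argues that $\sigma$ anticommutes with a topologically nilpotent operator (the action of $\gamma-1$) in the sense $\sigma(\gamma-1) = -(\gamma-1+\text{higher order})\sigma$, pairs up a basis, and concludes. I would try to phrase this via the characteristic ideal: $\iota$ sends the characteristic polynomial $f(T)$ of $X$ to $f((1+T)^{-1}-1)$ up to a unit, and the compatibility with $\sigma$ forces $f$ to be, up to units and powers of $T$, invariant under $T\mapsto (1+T)^{-1}-1$; a self-reciprocal-type polynomial of this kind has even degree away from the fixed point $T=0$, and the behavior at $T=0$ (i.e. the finite submodule / the part where $\gamma$ acts trivially) is controlled by the non-split hypothesis, which should kill it or make it even as well. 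Summing the local contributions gives $\lambda\equiv 0\bmod 2$.

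The main obstacle I anticipate is precisely the analysis at the ``fixed point'' $T=0$: a priori the $\iota$-invariance of the characteristic ideal only forces the part of $\lambda$ coming from roots $\neq 0$ (equivalently, from the quotient of $X$ on which $\Gamma$ acts nontrivially) to be even, and one must separately show that the multiplicity of $T$ in the characteristic polynomial — equivalently $\dim_{\Q_p}\Q_p\otimes(X/(\gamma-1)X)$ or the relevant coinvariants — is also even, or zero. This is where the hypothesis that $p$ is non-split in $k_\infty^a/\Q$ must do real work: it should identify the $\Gamma$-coinvariants of $X$ with (a subgroup related to) $A_F$ or $A_k$ equipped with a $J$-action, and the anti-cyclotomic nature forces the relevant eigenspace to be a minus part which, being acted on by $J$ with $J^2=1$ and no fixed vectors in the pertinent quotient, must be even-dimensional — or one shows it vanishes outright via a genus-theory / ramification argument at the unique prime above $p$. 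I would handle this by an explicit description of $X_\Gamma$ via the arithmetic of the layers $k_n^a$ and the total (or partial) ramification at $p$, then check the parity of that piece directly.
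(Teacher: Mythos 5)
Your strategy is sound and, modulo routine verifications, it does prove the theorem; but it is a genuinely different route from the one the paper takes. You work with the $\Lambda$-module structure of $V=\Q_p\otimes_{\Z_p}X_{k_\infty^a}$ and the semilinear involution $\sigma$ induced by $J$, deducing that the characteristic polynomial is invariant under $T\mapsto (1+T)^{-1}-1$ so that eigenvalues of $\gamma$ pair off as $\eta\leftrightarrow\eta^{-1}$; this is essentially the Carroll--Kisilevsky argument, which the author explicitly notes would work but deliberately avoids. The paper instead proves a purely group-theoretic lemma (Lemma \ref{lem2}): for $\ca G=X\rtimes\Gamma$ with $X\simeq\Z_p^r$, $X^\Gamma=0$, an automorphism sending $\gamma$ to $x\gamma^{\zeta}$ with $\zeta\in\mu_{p-1}$ of order $d$ forces $r\equiv 0\bmod d$, by showing the conjugating matrix intertwines $(\alpha_{ij})$ with $(\alpha_{ij})^{\zeta}$ and that $\eta\mapsto\eta^{\zeta}$ permutes the eigenvalues without fixed points. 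That buys a statement for arbitrary $d\mid p-1$ (Theorem \ref{3}: $\lambda\equiv s\bmod d$) at the cost of first splitting off the generalized eigenspace for $T$; your version is more module-theoretic and arguably shorter for $d=2$, but both hinge on the same two facts: eigenvalues of $\gamma$ are $\equiv 1\bmod\pi$ (which, for odd $p$, also disposes of the second fixed point $T=-2$ of your involution, a point you should state explicitly), and the multiplicity $s$ of $T$ vanishes.

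On that last point you correctly isolate the crux, but one of your two proposed resolutions is not viable: a minus eigenspace of an involution has no reason to be even-dimensional, so the ``$J$ acts with no fixed vectors, hence even dimension'' variant should be dropped. The correct resolution is your other alternative, and it is exactly what the paper does: the non-split hypothesis forces $k_\infty^a/k$ to be totally ramified at the unique prime above $p$ (Lemma \ref{lem4}: the prime is $J$-stable and inert at each layer, while $J$ acts by $-1$ on $\ga(k_1^a/k)$, so an unramified first layer would make the Frobenius both nontrivial and trivial), and then $X_\Gamma\simeq A_k$ is finite (Lemma \ref{lem3}(2)), so $\gamma-1$ is invertible on $V$ and $s=0$. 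With that substitution your outline closes up into a complete proof.
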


Theorem \ref{1} asserts that odd positive integers, 
half of all positive integers, 
actually do not appear as $\lambda$ if $p$ does not split in $k_{\infty}^a/\Q$. 
We remark that the values of $\lambda$ of the cyclotomic $\Z_p$-extensions are often odd. 
We also remark that, 
when $p=3$ or $5$, 
there is an imaginary quadratic field $k$ in which $p$ splits 
such that $\lambda$ of $k_{\infty}^a/k$ is $p$, 
see examples below in Theorem $2$ and Theorem $4.2$ of \cite{Fujii2013}. 
To prove Theorem \ref{1}, 
the argument of the proof of Theorem $3$ of \cite{Carroll-Kisilevsky}, 
based on the structure theorem of $\Z_p\llbracket T \rrbracket$-modules, 
can be applied.  
In this article, 
we will prove Theorem \ref{1} using a group-theoretical method which can be seen as a 
variant of Lemma $6.6$ of \cite{Kundu-Washington}, 
see Lemma \ref{lem2} in section $2$. 

The second result is a generalization of Theorem A for CM-fields.  

\begin{thm}\label{2}
Let $p$ be an odd prime number, 
$k$ a CM-field and $k^+$ the maximal totally real subfield of $k$. 
Let $k_{\infty}^a/k$ be an anti-cyclotomic like $\Z_p$-extension with respect to $k/k^+$ 
such that $p$ does not split in $k_{\infty}^a/\Q$. 
Suppose that $k$ contains no primitive $p$th roots of unity, 
$A_k\neq 0$ and that $A_{k^+}=0$. 
Then $A_{k_n^a}$ is not cyclic for $n\geq 1$. 
\end{thm}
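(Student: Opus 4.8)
The plan is to mimic the strategy behind Theorem A, with the extra input that the relevant pieces of the class group split off under the involution $J$ generating $\ga(k/k^+)$. Write $\Gm=\ga(k_{\infty}^a/k)$ and $\Gm_n$ for its quotient of order $p^n$, so that $\ga(k_n^a/k^+)$ is the (generalized) dihedral group $\Gm_n\rtimes\lr{J}$ with $J$ acting as inversion. Since $A_{k^+}=0$ and $k_n^a/k^+$ is ramified only above $p$ with procyclic inertia, genus theory for the quadratic extension $k_n^a/k^+$ (or rather for $k/k^+$ after base change) forces the $J$-fixed part $A_{k_n^a}^{+}$ to be trivial; hence $A_{k_n^a}=A_{k_n^a}^{-}$, the minus part, on which $J$ acts by $-1$. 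The first step is to make this reduction precise: because $p$ is odd, $A_{k_n^a}=A_{k_n^a}^{+}\oplus A_{k_n^a}^{-}$, and I would show $A_{k_n^a}^{+}$ is a subquotient of a group controlled by $A_{k^+}$ together with ramification data, all of which vanish, so $A_{k_n^a}=A_{k_n^a}^{-}$. In particular $A_k=A_k^{-}\neq 0$.

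Next I would exploit the group structure. Suppose for contradiction that $A_{k_n^a}$ is cyclic for some $n\geq 1$. Then $X_n:=A_{k_n^a}$ is a cyclic $p$-group on which $J$ acts by $-1$, and the dihedral group $G_n=\Gm_n\rtimes\lr{J}$ acts on $X_n$. The unramified abelian $p$-extension $L_n/k_n^a$ corresponding to $X_n$ is Galois over $k^+$, with $\ga(L_n/k^+)$ an extension of $G_n$ by the cyclic group $X_n$; conjugation gives the $G_n$-action, which factors through $\ga(k_n^a/k^+)$ acting through its abelianization on $\mathrm{Aut}(X_n)$. Since $X_n$ is cyclic of $p$-power order and $p$ is odd, $\mathrm{Aut}(X_n)$ is cyclic, so the commutator subgroup of $\ga(L_n/k^+)$ lands in a specific place; combined with $J$ acting by $-1$ this pins down $\ga(L_n/k^+)$ quite rigidly. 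The key point—this is where the hypothesis that $k$ has no primitive $p$th roots of unity enters—is that $\mu_p\not\subset k$ rules out the cyclotomic-type obstruction and lets one compare $X_n$ with $X_{n-1}$ (or with $A_k$) via the norm/corestriction maps: the coinvariants $(X_n)_{\Gm_n}$ surject onto $A_k$ by a standard argument (the extension $L_n/k$ contains the genus field piece), so $A_k$ is a quotient of a cyclic group and is itself cyclic and nontrivial, say of order $p^a$ with $a\geq 1$. One then derives a contradiction: the dihedral action of $G_1=\Gm_1\rtimes\lr{J}$ on the cyclic group $X_1$ with $J=-1$ and $\Gm_1$ of order $p$ forces either $X_1$ to be fixed by $\Gm_1$ (making $L_1/k^+$ abelian, contradicting anti-cyclotomic-likeness combined with $A_{k^+}=0$) or forces a contradiction with $|X_1|$ being a quotient producing $A_k$. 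I would organize this exactly as in Lemma 6.6 of Kundu–Washington and the proof of Theorem A, replacing "imaginary quadratic'' by "CM with $A_{k^+}=0$'' and using the minus-part reduction from the first step.

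Concretely, the mechanism for the contradiction is: if $A_{k_1^a}$ were cyclic, then since $\Gm_1\cong\Z/p$ acts on it and $J$ acts by $-1$, the fixed module $A_{k_1^a}^{\Gm_1}$ is either all of $A_{k_1^a}$ or has index $p$; in the first case $\Gm_1$ acts trivially and $L_1/k^+$ is abelian over $k^+$, so $k_1^a/k^+$ would be abelian, contradicting the defining property of an anti-cyclotomic like $\Z_p$-extension (here one also uses $A_{k^+}=0$ to know $L_1$ is not forced large for a trivial reason); in the second case a norm computation shows $A_k=(A_{k_1^a})_{\Gm_1}$ has order $p$ and the nontrivial commutators in $\ga(L_1/k^+)$ generate an index-$p$ subgroup of $A_{k_1^a}$, but the $J$-action $x\mapsto x^{-1}$ together with dihedral relations forces each commutator $[\sigma,J]=\sigma^{-2}$-type relation to square to the identity on $X_1$, i.e. $2$ annihilates the commutator subgroup, which for odd $p$ means the commutator subgroup is trivial — again abelian, contradiction. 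For general $n\geq 1$ one reduces to $n=1$ by noting $k_1^a\subseteq k_n^a$ and that a quotient of a cyclic group is cyclic: if $A_{k_n^a}$ is cyclic then so is its norm image, which contains $A_{k_1^a}$, handled above.

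The main obstacle I anticipate is the first step — rigorously establishing $A_{k_n^a}=A_{k_n^a}^{-}$ from $A_{k^+}=0$. In Theorem A the base field is $\Q$-like (imaginary quadratic over $\Q$), where $A_{\Q}=0$ is automatic and the ambiguous class number formula is transparent; here $k^+$ is a general totally real field, so I must check that the plus part of $A_{k_n^a}$ is genuinely killed by $A_{k^+}=0$ plus the ramification hypothesis (only one prime above $p$, totally ramified or at least with the ramification tamely understood because $p$ does not split in $k_{\infty}^a/\Q$). This requires a clean form of the ambiguous class number formula for $k_n^a/k^+$ and a verification that the number of ramified primes and the norm-index of units contribute nothing beyond $A_{k^+}$; the hypothesis $\mu_p\not\subset k$ is what guarantees the unit-index and cohomological terms behave, and controlling them is the technical heart of the argument. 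Once that reduction is in hand, the group-theoretic endgame is a direct adaptation of Kundu–Washington's dihedral argument and should be routine.
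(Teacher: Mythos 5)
Your proposal has the right skeleton --- reduce to $n=1$ via surjectivity of the norm maps, realize $\ga(L_{k_1^a}/k)$ as $A_{k_1^a}\rtimes G_1$ with $G_1=\ga(k_1^a/k)$, and invoke the Kundu--Washington lemma that no automorphism of this semidirect product can send $\tau$ to $y\tau^{-1}$ when $A_{k_1^a}$ is cyclic and the $G_1$-action is non-trivial. But there is a genuine gap at the step you yourself flag as uncertain, and your sketch of how to close it would not work. The Kundu--Washington lemma requires the action of $G_1$ on $A_{k_1^a}$ to be \emph{non-trivial}, and the case $A_{k_1^a}=A_{k_1^a}^{G_1}$ is precisely where all the arithmetic hypotheses are consumed. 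You dispose of this case by claiming that a trivial $G_1$-action would make $L_{k_1^a}/k^+$ abelian, contradicting anti-cyclotomic-likeness; this is false: $\ga(k_1^a/k^+)$ is already dihedral of order $2p$, hence non-abelian, whether or not $G_1$ acts trivially on the class group, so no contradiction arises this way. The paper's actual argument here is substantive: assuming $\# A_0=\# A_1$ (equivalently $A_1=A_1^{G_1}$, via Lemma 3(2)), one first gets $A_1=A_1^-$ from surjectivity of the norm on minus parts, then runs the ambiguous-class exact sequences $I_1^{G_1}\to C_1^{G_1}\to \hat{H}^{-1}(G_1,P_1)\to 0$ and $0\to \hat{H}^{-1}(G_1,P_1)\to \hat{H}^0(G_1,E_1)$, kills $\hat{H}^0(G_1,E_1)_p^-$ using Hasse's unit index and $\mu_p\not\subset k$, and uses $J(\f{p})=\f{p}$ to get $(I_1^{G_1})_p^-=(I_0)_p^-$; this makes the lifting map $A_0\to A_1$ surjective, contradicting the fact that norm composed with lift is the $p$th power map on a non-trivial group. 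None of this appears in your proposal.

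Two further points. First, your opening reduction $A_{k_n^a}=A_{k_n^a}^-$ for all $n$ is both unjustified and unnecessary: the involution $J$ only acts on $A_{k_n^a}$ up to conjugation by $\ga(k_n^a/k)$, its fixed field is a totally real field of degree $p^n[k^+:\Q]$ whose class group you do not control, and the proof only ever needs $A_0=A_0^-$ (which does follow from $A_{k^+}=0$ and $p$ odd) together with minus parts of ideal-theoretic objects at level $1$. Second, your rendering of the dihedral endgame (``$2$ annihilates the commutator subgroup'') is not the actual mechanism of Lemma 6.6: the obstruction there is the congruence $(1-p^u)b\equiv(1+p^u)b\bmod{p^{u+1}}$ with $b\in\Z^{\times}_p$, which fails for odd $p$; since you defer to that lemma this is repairable, but as written the sketch is incorrect. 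The reduction from general $n$ to $n=1$ and the appeal to the semidirect product structure are fine and match the paper.
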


If $p=3$ and $k=\Q(\sqrt{-3})$ then $A_k=0$. 
If $k$ is an imaginary quadratic field then $k^+=\Q$, 
and hence $A_{k^+}=A_{\Q}=0$. 
Thus we can say that Theorem \ref{2} is a generalization of Theorem A 
for CM-fields. 
We will see that the proof of Theorem A can be applied almost directly to the proof of Theorem \ref{2}, 
we further consider the structure of the ideal class groups and the 
unit groups as $\ga(k/k^+)$-modules.  

The rest of this section consists of affirmations of some notations and fundamental facts. 
For a finite cyclic group $G_1$ and a $G_1$-module $M$, 
let $\hat{H}^i(G_1,M)$ be the $i$th Tate cohomology group for $i=-1,0$.  
For a group $G$ and a $G$-module $N$, 
put $N^G=\{x\in N\mid gx=x\;\mbox{for all }g\in G\}$. 
Let $p$ be a prime number. 
For an algebraic extension $K/\Q$, 
let $L_K/K$ be the maximal unramified abelian pro-$p$ extension and put 
$X_K=\ga(L_K/K)$. 
If $K/F$ is a Galois extension, 
it follows that $L_K/F$ is also a Galois extension. 
We then define an action of $\ga(K/F)$ on $X_K$ as follows.  
Let $g\in \ga(K/F)$, 
$x\in X_K$ and denote by $\tilde{g}\in \ga(L_K/F)$ an extension of $g$. 
Then $\ga(K/F)$ acts on $X_K$ as the inner automorphism $g(x)=\tilde{g}x\tilde{g}^{-1}$. 
When $K/\Q$ is a finite extension, 
by class field theory, 
the Artin map induces an isomorphism $A_K\simeq X_K$ as $\ga(K/F)$-modules. 
Let $\langle J \rangle$ be a cyclic group of order $2$. 
For an odd prime number $p$ and a $\Z_p[\langle J \rangle]$-module $M$, 
put $M^-=\frac{1-J}{2}M$. 
Also, 
for a $\Z[\langle J \rangle]$-module $M$, put $M_p^-=(\Z_p\otimes_{\Z} M)^-$. 
It is known that the functor $M\mapsto M_p^-$ is exact. 
For a positive integer $m$, 
let $\mu_m$ be the group of all $m$th roots of unity.

\section{
Automorphisms of semidirect products of some pro-$p$ abelian groups
}

In this section, 
let $p$ be an odd prime number. 
Let $G_1=\lr{\tau}$ be a cyclic group of order $p$ generated by $\tau$, 
and let $A_1$ be a cyclic group of order $p^{u+1}$ for some positive integer  $u$. 
Suppose that $G_1$ acts on $A_1$ non-trivially. 
Let $G=A_1\rtimes G_1$. 
Kundu and Washington proved the following. 

\begin{lem}[Lemma 6.6 of \cite{Kundu-Washington}]\label{lem1}
There is no automorphism $\phi$ of $G$ such that 
$\phi(\tau)=y\tau^{-1}$ with $y\in A_1$. 
\end{lem}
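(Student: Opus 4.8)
The plan is to argue by contradiction, using the explicit structure of $G = A_1 \rtimes G_1$ and tracking what an automorphism $\phi$ with $\phi(\tau) = y\tau^{-1}$ is forced to do. Write $A_1 = \langle a \rangle$ with $a$ of order $p^{u+1}$, and let the action of $G_1$ on $A_1$ be $\tau a \tau^{-1} = a^{c}$ for some integer $c$ with $c \not\equiv 1 \bmod p$ (non-triviality) and $c^p \equiv 1 \bmod p^{u+1}$ (so that $\tau$ has order $p$ in $G$). Since $p$ is odd, such a $c$ exists and satisfies $c \equiv 1 \bmod p$ is false but $c \equiv 1 \bmod p$ must actually hold modulo... more precisely $c$ is a nontrivial $p$th root of unity in $(\Z/p^{u+1})^\times$, so $c \equiv 1 \bmod p^{u}$ is false in general but $c \equiv 1 + p^{u} t \bmod p^{u+1}$ type constraints appear; the key usable fact is simply that $c - 1$ is a unit times $p^{v}$ for some $1 \le v \le u$, hence $c - 1 \not\equiv 0 \bmod p^{u+1}$.

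First I would determine the image $\phi(a)$. Because $\phi$ is an automorphism and $A_1$ is (say) characteristic in $G$ — or, if that is not immediate, because $\phi$ must send the derived subgroup and the relevant verbal subgroups to themselves — one shows $\phi(A_1) = A_1$, so $\phi(a) = a^{r}$ for some $r \in (\Z/p^{u+1})^\times$. Then I would apply $\phi$ to the defining relation $\tau a \tau^{-1} = a^{c}$. The left-hand side maps to $\phi(\tau)\phi(a)\phi(\tau)^{-1} = (y\tau^{-1})\, a^{r}\, (y\tau^{-1})^{-1} = y\, (\tau^{-1} a^{r} \tau)\, y^{-1} = y\, a^{r c^{-1}}\, y^{-1} = a^{r c^{-1}}$, where the last equality holds because $A_1$ is abelian and $y \in A_1$. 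The right-hand side maps to $\phi(a)^{c} = a^{rc}$. Comparing exponents in $\Z/p^{u+1}$ gives $r c^{-1} \equiv r c \bmod p^{u+1}$, hence $c^{2} \equiv 1 \bmod p^{u+1}$ (using that $r$ is a unit).

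The contradiction then comes from combining $c^2 \equiv 1$ with $c^p \equiv 1 \bmod p^{u+1}$ and $p$ odd: since $\gcd(2,p) = 1$, this forces $c \equiv 1 \bmod p^{u+1}$, contradicting the non-triviality of the $G_1$-action on $A_1$. The main obstacle I anticipate is the bookkeeping at the two ends: (i) justifying cleanly that $\phi(A_1) = A_1$ — the cleanest route is to note $A_1$ is the unique subgroup of its order that is normal with the given quotient, or to use that $[G,G] \subseteq A_1$ together with a counting/exponent argument pinning down $A_1$ among the subgroups containing $[G,G]$; and (ii) making sure the relation $(y\tau^{-1})^{-1} = \tau y^{-1}$ and the conjugation $\tau^{-1} a^r \tau = a^{r c^{-1}}$ are handled with the correct inverse exponents, since an off-by-sign there would spuriously collapse the argument. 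Once those are in place the exponent comparison is a one-line computation.
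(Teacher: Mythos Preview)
Your overall strategy---apply $\phi$ to the conjugation relation $\tau a\tau^{-1}=a^{c}$ and compare exponents---is exactly the paper's approach. The genuine gap is the step ``$\phi(A_1)=A_1$'': $A_1$ is \emph{not} characteristic in $G$, and neither of your proposed justifications goes through. Concretely, with $c=1+p^{u}$ one checks that $\psi(a)=a\tau$, $\psi(\tau)=\tau$ extends to an automorphism of $G$: the relation $\psi(\tau)\psi(a)\psi(\tau)^{-1}=a^{1+p^{u}}\tau=(a\tau)^{1+p^{u}}=\psi(a)^{1+p^{u}}$ holds, and $(a\tau)^{p}=a^{p}$ so the order condition is satisfied. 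Thus $\langle a\tau\rangle$ is a second cyclic normal subgroup of order $p^{u+1}$, containing $[G,G]=\langle a^{p^{u}}\rangle$, with cyclic quotient of order $p$; it is indistinguishable from $A_1$ by any of the invariants you list.

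The paper avoids this by \emph{not} assuming $\phi(a)\in A_1$: it writes $\phi(x)=x^{b}\tau^{c}$ with a possibly nonzero $\tau$-component and carries that extra factor through the computation. Two preliminary constraints are then needed before the comparison: from $\phi(\tau)^{p}=1$ one finds that $y\in A_1^{p^{u}}$, and from $\phi(x)^{p^{u}}\neq 1$ one finds $p\nmid b$. With these in hand the two evaluations of $\phi(\tau x\tau^{-1})$ give $x^{(1-p^{u})b}\tau^{c}$ versus $x^{(1+p^{u})b}\tau^{c}$, forcing $2bp^{u}\equiv 0\bmod p^{u+1}$ and hence the desired contradiction. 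Your cleaner endgame ($c^{2}\equiv 1$ together with $c^{p}\equiv 1$ forces $c\equiv 1$) is correct once $\phi(a)\in A_1$ is known, but you must first rule out the $\tau$-component of $\phi(a)$, and that requires essentially the same extra computations the paper performs.
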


Kundu and Washington have used a presentation of $G$ by $2\times 2$ matrices. 
We prefer to give the proof briefly here. 

\begin{proof}
Suppose that such an automorphism $\phi$ of $G$ exists. 
Let $x$ be a generator of $A_1$. 
We may assume that $\tau x\tau^{-1}=x^{1+p^u}$. 
We remark that 
$(1+p^u)^r\equiv 1+rp^u\bmod{p^{u+1}}$ for all $r\in \Z$. 
Put $y=x^a$ and $\phi(x)=x^b\tau^c$ for some $a,b,c\in \Z$. 
Since $\tau^p=1$ and
\begin{align*}
1
&
=
\phi(\tau)^p
\\
&
=
(x^a\tau^{-1})^p
\\
&
=
x^a\tau^{-1}x^a\tau \tau^{-2}x^a\tau^2 \cdots \tau^{-(p-1)}x^a\tau^{p-1}\tau^{-p}
\\
&
=
x^ax^{(1+p^u)^{-1}a}x^{(1+p^u)^{-2}a}\cdots x^{(1+p^u)^{-(p-1)}a}
\\
&
=
x^{a\sum_{t=0}^{p-1}(1-tp^u)}
\\
&
=
x^{ap},
\end{align*}
we have $a\equiv 0\bmod{p^u}$. 
Also, 
since $x^{p^u}\neq 1$ and
$$
1
\neq 
\phi(x)^{p^u}
=
(x^b\tau^c)^{p^u}
=
x^{b\sum_{t=0}^{p^u-1}(1+ctp^u)}
=
x^{bp^u}, 
$$
we have $b\not\equiv 0\bmod{p}$. 
We then have
\begin{align*}
\phi(\tau x\tau^{-1})
&
=
\phi(\tau)\phi(x)\phi(\tau)^{-1}
\\
&
=
x^a\tau^{-1}x^b\tau^c\tau x^{-a}
\\
&
=
x^a\tau^{-1}x^b\tau\tau^c x^{-a}\tau^{-c}\tau^c
\\
&
=
x^ax^{(1+p^u)^{-1}b}x^{-(1+p^u)^ca}\tau^c
\\
&
=
x^{(1-p^u)b+a-a(1+cp^u)}\tau^c
\\
&
=
x^{(1-p^u)b-acp^u}\tau^c
\\
&
=
x^{(1-p^u)b}\tau^c
.
\end{align*}
On the other hand, 
we have
$$
\phi(\tau x\tau^{-1})
=
\phi(x^{1+p^u})
=
(x^b\tau^c)^{1+p^u}
=
x^{b\sum_{t=0}^{p^u}(1+tcp^u)}\tau^{(1+p^u)c}
=
x^{(1+p^u)b}\tau^c.
$$
Thus it follows that $(1-p^u)b\equiv (1+p^u)b\bmod{p^{u+1}}$. 
Since $p$ is an odd prime number and $b\not\equiv 0\bmod{p}$, 
this congruence does not hold. 
\end{proof}

Here, 
we give a variant of the above lemma to semidirect products of certain pro-$p$ abelian groups. 
Let $\Gamma$ be a pro-$p$ groups which is isomorphic to $\Z_p$ with a topological generator 
$\gamma$. 
Let $X$ be a pro-$p$ abelian group isomorphic to $\Z_p^r$ for some $r>0$. 
Let $\{x_1,\cdots, x_r\}$ be a basis of $X$. 
Suppose that $\Gamma$ acts on $X$ and that $X^{\Gamma}=0$. 
Put $\ca{G}=X\rtimes \Gamma$. 

\begin{lem}\label{lem2}
Here we regard $\mu_{p-1}\subseteq \Z_p^{\times}$. 
Let $\zeta \in \mu_{p-1}$ and $d$ the order of $\zeta$. 
If there is a topological automorphism $\psi$ of $\ca{G}$ such that $\psi(X)= X$ and that 
$\psi(\gamma)=x\gamma^{\zeta}$ with $x\in X$, 
then $r\equiv 0\bmod{d}$. 
\end{lem}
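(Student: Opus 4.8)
The plan is to reduce the statement to a linear-algebra computation on the action of $\Gamma$ on $X$ by looking at how the hypothetical automorphism $\psi$ interacts with the conjugation action. Let $\gamma$ act on $X$ via the continuous $\Z_p$-linear operator $\sigma \in \mathrm{GL}_r(\Z_p)$, so that $\gamma v \gamma^{-1} = \sigma(v)$ for $v \in X$. The hypothesis $X^{\Gamma}=0$ says $\sigma - 1$ is injective on $X$; since $X \simeq \Z_p^r$ this means $\det(\sigma - 1) \neq 0$. First I would write out what $\psi(\gamma) = x\gamma^{\zeta}$ forces. Conjugation by $\psi(\gamma)$ on $\psi(X) = X$ must coincide with $\psi$ applied to conjugation by $\gamma$; that is, writing $\alpha \colon X \to X$ for the restriction $\psi|_X$ (an element of $\mathrm{GL}_r(\Z_p)$ since $\psi$ is an automorphism preserving $X$), we get for all $v \in X$
\begin{equation*}
\alpha(\sigma(v)) = (x\gamma^{\zeta})\,\alpha(v)\,(x\gamma^{\zeta})^{-1} = x\,\sigma^{\zeta}(\alpha(v))\,x^{-1} = \sigma^{\zeta}(\alpha(v)),
\end{equation*}
where $\sigma^{\zeta}$ makes sense because $\zeta \in \mu_{p-1} \subseteq \Z_p^{\times}$ and $\sigma$ is a $p$-adic unit operator in the appropriate sense — more precisely one should take $\zeta$ to be a $p$-adic integer representing the root of unity and note $\sigma^{\zeta} = \exp(\zeta \log \sigma)$ is well defined, or simply work on each $\Z_p$-rank-one piece; I will need to be a little careful here and this is likely the main technical point. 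The displayed identity says $\alpha \sigma \alpha^{-1} = \sigma^{\zeta}$ as operators on $X$.

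Granting that $\sigma$ and $\sigma^{\zeta}$ are conjugate in $\mathrm{GL}_r(\Z_p)$, they have the same characteristic polynomial, hence the same multiset of eigenvalues in $\ov{\Q_p}$ (or $\C_p$). If $\theta$ is an eigenvalue of $\sigma$, then $\theta^{\zeta}$ is an eigenvalue of $\sigma^{\zeta}$, so raising to the $\zeta$-power permutes the eigenvalue multiset of $\sigma$. Next I would argue that no eigenvalue of $\sigma$ equals $1$: if $1$ were an eigenvalue, then by a standard argument (the eigenvalue lies in a finite extension of $\Q_p$, and $\sigma$ has finite order modulo a pro-$p$ congruence subgroup, or one uses that $\sigma-1$ is injective but its reduction... ) — actually the cleanest route is: $X^{\Gamma} = \ker(\sigma - 1) = 0$ in $\Z_p^r$, and since $\Z_p^r$ is $\Z_p$-free this gives $\det(\sigma-1)\neq 0$, so $1$ is not an eigenvalue of $\sigma$. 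Then the map $\theta \mapsto \theta^{\zeta}$ acts on the finite set of eigenvalues of $\sigma$ (with multiplicity), all of which are $\neq 1$, and since $\zeta$ has multiplicative order $d$ in $(\Z/(p-1))^{\times} \cong \mu_{p-1}$, iterating $d$ times gives $\theta \mapsto \theta^{\zeta^d} = \theta$ (as $\zeta^d = 1$), so this is a permutation of order dividing $d$. To get $d \mid r$ I want to show every orbit of $\theta \mapsto \theta^{\zeta}$ on the eigenvalues has size exactly $d$; this holds provided no eigenvalue $\theta$ satisfies $\theta^{\zeta^e} = \theta$ for some $0 < e < d$, i.e. $\theta^{\zeta^e - 1} = 1$, i.e. $\theta$ is a root of unity of order dividing $\zeta^e - 1$. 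One then has to rule out $\sigma$ having eigenvalues that are roots of unity of small order — this is where the hypothesis that $p$ does not split in $k_{\infty}^a/\Q$ in the applications, or here just the abstract setup, must be used, and it is the step I expect to be the genuine obstacle.

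So the heart of the matter, and where I would concentrate the real work, is establishing that the conjugation action $\sigma$ of $\Gamma$ on $X \simeq \Z_p^r$ has the property that the permutation $\theta \mapsto \theta^{\zeta}$ on its eigenvalue multiset has all orbits of full length $d$; equivalently, that the characteristic polynomial of $\sigma$ in $\Z_p[t]$ is a product of factors each corresponding to a free orbit under the substitution $t \mapsto$ (the minimal polynomial relation coming from $t \mapsto t^{\zeta}$). Concretely, I would: (i) note $X \otimes \Q_p$ decomposes under $\langle \sigma \rangle$ into generalized eigenspaces, grouped into $\mathrm{Gal}$-orbits of eigenvalues; (ii) show the conjugation $\alpha$ must carry the $\theta$-part isomorphically onto the $\theta^{\zeta}$-part, so these parts all have equal $\Q_p$-dimension and $\zeta$ cyclically permutes them; (iii) show each such cycle has length exactly $d$ because an eigenvalue $\theta$ of $\sigma$ with $\theta^{\zeta^e} = \theta$ for $0<e<d$ would force $\theta$ to be a nontrivial root of unity of order prime to $p$ dividing $p^e - 1$ type quantity, which is incompatible with $\sigma - 1$ being injective together with the non-splitting hypothesis (one uses that such a $\theta$ would give $\sigma$ a nontrivial fixed point after base change, contradicting $X^{\Gamma}=0$ at the level of the corresponding eigenspace, or directly that the relevant $\Z_p$-lattice piece would have a $\Gamma$-fixed vector). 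Summing the dimensions $r = \sum (\text{orbit size}) = \sum d = d \cdot (\#\text{orbits})$ then yields $r \equiv 0 \bmod d$, as claimed. The delicate bookkeeping is entirely in step (iii): making rigorous that "$\sigma$ has no eigenvalue that is a root of unity of order dividing $\zeta^e - 1$" follows from the hypotheses, and I would handle the borderline case $\theta = 1$ separately using $X^{\Gamma} = 0$ directly.
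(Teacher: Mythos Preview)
Your overall strategy is exactly the paper's: pass to the $\Q_p$-linear action of $\gamma$ on $X$, use $\psi$ to show that $\sigma$ and $\sigma^{\zeta}$ are conjugate in $\mathrm{GL}_r$, and then analyze the permutation $\theta\mapsto\theta^{\zeta}$ on the eigenvalue multiset. The gap is precisely your step~(iii), and your guess about how to close it points in the wrong direction.

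You assert that a fixed eigenvalue would be ``a nontrivial root of unity of order prime to $p$'' and that some external non-splitting hypothesis from the arithmetic application is required. Both claims are incorrect; the lemma is purely group-theoretic and its hypotheses suffice. The observation you are missing is that $\Gamma\simeq\Z_p$ is a pro-$p$ group, so its image in $\mathrm{GL}_r(\Z_p)$ is pro-$p$; hence $\sigma\equiv 1\pmod{p}$ and every eigenvalue $\theta$ of $\sigma$ satisfies $\theta\equiv 1\pmod{\pi}$ for a uniformizer $\pi$ of $\Q_p(\theta)$. (This is also exactly what makes $\sigma^{\zeta}$ well defined for $\zeta\in\Z_p$, resolving your earlier worry.) Combined with $X^{\Gamma}=0$, which gives $\theta\neq 1$, each eigenvalue is a nontrivial $1$-unit. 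Now suppose $\theta^{\zeta^c}=\theta$. If $\theta$ has finite order then, being a $1$-unit, that order is a power of $p$, and $\theta^{\zeta^c}=\theta$ forces $\zeta^c\equiv 1\pmod{p}$; since the reduction $\mu_{p-1}\to(\Z/p\Z)^{\times}$ is injective this yields $\zeta^c=1$, i.e.\ $d\mid c$. If $\theta$ has infinite order, it generates a free rank-one $\Z_p$-submodule of the $1$-units, so $\theta^{\zeta^c-1}=1$ again forces $\zeta^c=1$. Either way every orbit under $\theta\mapsto\theta^{\zeta}$ has length exactly $d$, and summing multiplicities over orbits gives $r\equiv 0\pmod{d}$. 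No input beyond the stated hypotheses on $\ca{G}$ is needed.
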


\begin{proof}
If $\zeta=1$ then $d=1$, 
and hence there is no what to do. 
Let $\zeta\neq 1$. 
Suppose that there is such an automorphism $\psi$. 
Let $(\alpha_{ij}),(\delta_{ij})$ be invertible $r\times r$ matrices with entries in $\Z_p$ 
such that 
$\gamma x_j\gamma^{-1}=\prod_{i=1}^rx_i^{\alpha_{ij}}$ 
and $\psi(x_j)=\prod_{i=1}^rx_i^{\delta_{ij}}$. 
Since $\Gamma$ is a pro-$p$ group, 
we can define $(\alpha_{ij})^{\zeta}$. 
Let $\eta_1,\cdots,\eta_t$ be all of distinct eigen values of $(\alpha_{ij})$ with multiplicities $m_1,\cdots ,m_t$. 
Thus $m_1+\cdots +m_t=r$. 
By our assumption that $\Gamma$ is a pro-$p$ group and $X^{\Gamma}=0$, 
we have $\eta_i \equiv 1 \bmod{\pi}$ and $\eta_i\neq 1$ for all $1\leq i\leq t$, 
here, 
$\pi$ denotes a prime element of $\Q_p(\eta_i\mid 1\leq i\leq t)$. 
It also holds that 
all of distinct eigen values of $(\alpha_{ij})^{\zeta}$ are 
$\eta_1^{\zeta},\cdots, \eta_t^{\zeta}$ with multiplicities $m_1,\cdots,m_t$. 
Indeed, 
we can easily see that $\eta_1^{\zeta},\cdots, \eta_t^{\zeta}$ are all of eigen values of 
$(\alpha_{ij})^{\zeta}$. 
If $\eta_i^{\zeta}=\eta_j^{\zeta}$, 
then we have $\eta_i=(\eta_i^{\zeta})^{\zeta^{-1}}=(\eta_j^{\zeta})^{\zeta^{-1}}=\eta_j$. 
Thus elements $\eta_1^{\zeta},\cdots, \eta_t^{\zeta}$ are all of distinct eigen values of 
$(\alpha_{ij})^{\zeta}$ with multiplicities $m_1,\cdots,m_t$. 
Let $(\beta_{ij})=(\alpha_{ij})^{\zeta}$. 
Let $j$ be an integer with $1\leq j\leq r$. 
Since $\gamma^{\zeta}x_j\gamma^{-\zeta}=\prod_{i=1}^rx_i^{\beta_{ij}}$, 
it follows that 
\begin{align*}
\psi(\gamma x_j \gamma^{-1})
&=
\psi(\gamma)\psi(x_j)\psi(\gamma)^{-1}
\\
&
=
(x\gamma^{\zeta})\left(\prod_{k=1}^rx_k^{\delta_{kj}}\right)(\gamma^{-\zeta} x^{-1})
\\
&
=
x\left(\prod_{k=1}^r\gamma^{\zeta}x_k^{\delta_{kj}}\gamma^{-\zeta} \right)x^{-1}
\\
&
=
x\left(\prod_{k=1}^r\prod_{i=1}^rx_i^{\beta_{ik}\delta_{kj}} \right)x^{-1}
\\
&
=
\prod_{i=1}^rx_i^{\sum_{i=1}^r\beta_{ik}\delta_{kj}}.
\end{align*}
On the other hand, 
it also follows that
\begin{align*}
\psi(\gamma x_j \gamma^{-1})
&
=
\psi\left(\prod_{k=1}^rx_k^{\alpha_{kj}}\right)
\\
&
=
\prod_{k=1}^r\psi(x_k)^{\alpha_{kj}}
\\
&
=
\prod_{k=1}^r\prod_{i=1}^rx_i^{\delta_{ik}\alpha_{kj}}
\\
&
=
\prod_{i=1}^rx_i^{\sum_{k=1}^r\delta_{ik}\alpha_{kj}}.
\end{align*}
Thus it holds that $(\alpha_{ij})^{\zeta}(\delta_{ij})=(\delta_{ij})(\alpha_{ij})$, 
and hence we have
$$
\prod_{i=1}^t(T-\eta_{i}^{\zeta})^{m_i}
=
\prod_{i=1}^t(T-\eta_{i})^{m_i},
$$
here we denote by $T$ a variable, 
because the multiplicity of $\eta_i^{\zeta}$ as an eigen value of the matrix 
$(\alpha_{ij})^{\zeta}$ is $m_i$ as stated in the above. 
The correspondence 
$\eta_i\mapsto \eta_i^{\zeta}$ defines an action of $\lr{\zeta}$ on 
$\{\eta_1,\cdots,\eta_t\}$ without fixed points. 
Indeed, 
suppose that $\eta_i=\eta_i^{\zeta^c}$ holds for some $i$ with $1\leq i\leq t$ and an integer $c$. 
We remark here that $\eta_i\neq 1$. 
If the order of $\eta_i$ is finite then is a power of $p$, 
hence it holds that $\zeta^c \equiv 1\bmod{p}$. 
Since $\zeta\in \mu_{p-1}$, 
we have $\zeta^c=1$. 
If the order of $\eta_i$ is infinite then it holds that $\zeta^c=1$. 
Thus we have $c\equiv 0\bmod{d}$ in both cases. 
Thus, 
if necessary, 
by rearranging, 
we can see that 
all of distinct eigen values of $(\alpha_{ij})$ can be represented as 
$$
\eta_1,\cdots ,\eta_s,\eta_1^{\zeta},\cdots,\eta_{s}^{\zeta}
,\cdots,
\eta_1^{\zeta^{d-1}},\cdots,\eta_{s}^{\zeta^{d-1}}
$$
with multiplicities
$$
m_1,\cdots, m_s,m_1,\cdots,m_s,\cdots,m_1,\cdots,m_s,
$$
for some $s$. 
Therefore, 
we have 
$r=d(m_1+\cdots +m_s)$. 
\end{proof}

\section{Lemmas}

In this section, 
we give some lemmas.

\begin{lem}\label{lem3}
Let $p$ be a prime number and $F/\Q$ a finite extension. 
Let $K/F$ be a Galois extension. 
Assume that there is at least one prime ${\frak l}$ of $k$ such that $K/F$ is totally ramified at 
${\frak l}$. 
\\
$(1)$ 
Let $M/K$ be an unramified abelian extension such that $M/F$ is a Galois extension. 
Then it holds that $\ga(M/F)\simeq \ga(M/K)\rtimes \ga(K/F)$. 
\\
$(2)$ 
Suppose that $\ga(K/F)$ is isomorphic to $\Z_p$ or 
$\Z/p^r\Z$ for some positive integer $r$. 
Let $\tau$ be a topological generator of $\ga(K/F)$. 
If $K/F$ is ramified at only the prime ${\frak l}$, 
then we have $X_K/(\tau-1)X_K\simeq X_F$. 
\end{lem}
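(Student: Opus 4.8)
The plan for part $(1)$ is to produce an explicit complement of the normal subgroup $\ga(M/K)$ inside $\ga(M/F)$ by means of an inertia group. I would fix a prime $\f{L}$ of $M$ above $\f{l}$ and let $I\leq\ga(M/F)$ be its inertia group. Because $M/K$ is unramified, $I\cap\ga(M/K)$ is the inertia group of $\f{L}$ in $M/K$ and hence trivial; because $K/F$ is totally ramified at $\f{l}$, the image of $I$ under $\ga(M/F)\to\ga(K/F)$ is all of $\ga(K/F)$. Thus $I$ is a complement of $\ga(M/K)$ mapping isomorphically onto $\ga(K/F)$, so $\ga(M/F)\simeq\ga(M/K)\rtimes\ga(K/F)$, the action being conjugation through $I$.

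For part $(2)$ I would first record that $L_K/F$ is Galois, set $G=\ga(L_K/F)$, and apply part $(1)$ with $M=L_K$ to get $G=X_K\rtimes\ga(K/F)$; write $\tau$ also for the chosen generator, viewed in $G$. A short commutator computation in a semidirect product (using that $X_K$ is abelian and that $(\tau^a-1)X_K\subseteq(\tau-1)X_K$ for all $a$) should give $[G,G]=(\tau-1)X_K$, which is already closed; hence the maximal subextension $\ca{M}$ of $L_K/F$ that is abelian over $F$ is the fixed field of $(\tau-1)X_K$, and $\ga(\ca{M}/K)=X_K/(\tau-1)X_K$. The theorem then reduces to the identification $\ca{M}=L_FK$, for then $X_K/(\tau-1)X_K=\ga(L_FK/K)\simeq\ga(L_F/F)=X_F$, the last isomorphism using $L_F\cap K=F$ (since $L_F/F$ is unramified and $K/F$ ramifies at $\f{l}$).

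To get $\ca{M}=L_FK$ I would prove the two inclusions. For $L_FK\subseteq\ca{M}$: the compositum $L_FK/F$ is abelian (its Galois group embeds into $\ga(L_F/F)\times\ga(K/F)$) and pro-$p$, and $L_FK/K$ is unramified, because at any prime the inertia group of $\ga(L_FK/F)$ maps trivially to $\ga(L_F/F)$, hence lies in $\ga(L_FK/L_F)$, so its intersection with $\ga(L_FK/K)$ — namely the inertia group of $L_FK/K$ — is trivial; therefore $L_FK\subseteq L_K$ and, being abelian over $F$, it lies in $\ca{M}$. For $\ca{M}\subseteq L_FK$: here $\ca{M}/F$ is abelian pro-$p$ and, since $\ca{M}\subseteq L_K$ and $K/F$ ramifies only at $\f{l}$, unramified outside $\f{l}$, while $\ca{M}/K$ is unramified and $K/F$ is totally ramified at $\f{l}$. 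Letting $I$ be the inertia group of $\ga(\ca{M}/F)$ at a prime over $\f{l}$ (well defined and normal, as $\ga(\ca{M}/F)$ is abelian), its fixed field $\ca{M}^{I}$ is unramified abelian pro-$p$ over $F$, so $\ca{M}^{I}\subseteq L_F\subseteq L_FK\subseteq\ca{M}$. Then $\ga(\ca{M}/L_F)\leq\ga(\ca{M}/\ca{M}^{I})=I\simeq\ga(K/F)$ is cyclic pro-$p$, and $\ga(\ca{M}/L_F)/\ga(\ca{M}/L_FK)\simeq\ga(K/F)$; inspecting the closed subgroups of $\Z_p$, respectively the subgroups of $\Z/p^r\Z$ via a degree count, forces $\ga(\ca{M}/L_FK)=1$, i.e. $\ca{M}=L_FK$.

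I expect the inclusion $\ca{M}\subseteq L_FK$ — equivalently, the injectivity of the canonical surjection $X_K/(\tau-1)X_K\twoheadrightarrow X_F$ — to be the crux: it is precisely here that one must use both that $\f{l}$ is totally ramified in $K/F$ and that it is the only ramified prime, and the two cases $\ga(K/F)\simeq\Z_p$ and $\ga(K/F)\simeq\Z/p^r\Z$ want slightly different final bookkeeping. The other steps (the inertia complement in part $(1)$, the commutator computation, and $L_FK\subseteq\ca{M}$) are routine.
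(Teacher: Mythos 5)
Your proposal is correct and follows essentially the same route as the paper: part $(1)$ via the inertia group at a prime over $\f{l}$ as a complement of $\ga(M/K)$, and part $(2)$ by identifying the maximal subextension of $L_K/F$ abelian over $F$ with $L_FK$ (whose Galois group over $K$ is $X_K/(\tau-1)X_K$) using that this field is unramified outside $\f{l}$ and that its inertia subgroup meets $\ga(\cdot/K)$ trivially. The only cosmetic difference is that the paper gets $\ga(L/L_FK)=T\cap\ga(L/K)=1$ directly from the identification of the fixed field of $T$ with $L_F$, whereas you conclude by a count of closed subgroups of $\Z_p$ (resp. $\Z/p^r\Z$); both are fine.
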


\begin{proof}
$(1)$ 
Let $\ca{T}$ be the inertia subgroup of a prime of $M$ above ${\frak l}$ in $\ga(M/F)$. 
Since $K/F$ is totally ramified at ${\frak l}$ and since $\ca{T}\cap \ga(M/K)=1$, 
it holds that 
$$
\ca{T}\simeq \ga(M/K)\ca{T}/\ga(M/K) \simeq  \ga(K/F).
$$
Hence we have $\ga(M/F)=\ga(M/K)\ca{T}$. 
For $x,x'\in \ga(M/K)$ and $t,t'\in \ca{T}$, 
assume that $xt=x't'$. 
Since ${x'}^{-1}x=t't^{-1}\in \ga(M/K)\cap \ca{T}=1$, 
we have $x=x'$ and $t=t'$. 
This shows that 
$$
\ga(M/F)=\ga(M/K)\ca{T}\simeq \ga(M/K)\rtimes \ga(K/F).
$$
This was what we should prove. 
\\
$(2)$ 
Let $L$ be the maximal intermediate field of $L_K/F$ such that $L/F$ is abelian. 
Since the extensions $L_F/F$ and $K/F$ are abelian, 
it follows that $L_FK\subseteq L$. 
Let $T$ be the inertia subgroup at ${\frak l}$ in $\ga(L/F)$. 
Then the fixed field of $T$ in $L$ is $L_F$. 
Thus we have $\ga(L/L_FK)=T\cap \ga(L/K)=1$ since $L/K$ is unramified, 
and hence $L=L_FK$. 
One can easily see that $\ga(L_K/L)=(\tau-1)X_K$. 
Therefore, 
we have 
$$
X_F\simeq \ga(L_FK/K)=\ga(L/K)\simeq X_K/(\tau-1)X_K.
$$
This completes the proof. 
\end{proof}

Let $F/\Q$ be a finite extension, 
$k/F$ a quadratic field and $J$ the generator of $\ga(k/F)$. 
Let $k_{\infty}^a/k$ be an anti-cyclotomic like $\Z_p$-extension with respect to $k/F$. 

\begin{lem}\label{lem4}
If the prime number $p$ does not split in $k_{\infty}^a/\Q$, 
then $k_{\infty}^a/k$ is totally ramified at the unique prime of $k$ above $p$. 
\end{lem}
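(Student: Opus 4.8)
The plan is to determine the inertia subgroup of the unique prime above $p$ inside $\ga(k_{\infty}^a/k)\cong\Z_p$ and to exclude the possibility that it is proper. First I would note that, since $p$ does not split in $k_{\infty}^a/\Q$, it splits in none of the intermediate fields; in particular there is a unique prime $\mathfrak{p}$ of $k$, a unique prime $\mathfrak{p}_F$ of $F$, and a unique prime $\mathfrak{P}$ of $k_{\infty}^a$ above $p$. Because the number of primes of $k_{\infty}^a$ lying over $\mathfrak{p}$ equals the index in $\ga(k_{\infty}^a/k)$ of the decomposition group of $\mathfrak{P}$, this decomposition group is all of $\ga(k_{\infty}^a/k)$, and hence the corresponding inertia subgroup $I$ is a closed subgroup of $\Z_p$: either $I=\ga(k_{\infty}^a/k)$, which is exactly the assertion of the lemma, or $I\subseteq\ga(k_{\infty}^a/k_1^a)$.

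Assume the latter and set $k'=k_1^a$; then $k'/k$ is unramified at $\mathfrak{p}$ and $[k':k]=p$. The extension $k'/F$ is Galois, and since $J$ acts on $\ga(k_{\infty}^a/k)$ by inversion it acts on the quotient $\ga(k'/k)\cong\Z/p\Z$ by inversion as well; as $p$ is odd, it follows that $\ga(k'/F)$ is non-abelian of order $2p$ --- dihedral --- and a direct computation with the inversion action shows that each of its subgroups of order $2$ has $p\geq 3$ conjugates and is therefore not normal. Now $p$ does not split in $k'/\Q$, so $k'$ has a unique prime $\mathfrak{p}'$ above $p$, whose decomposition group in $\ga(k'/F)$ is again all of $\ga(k'/F)$; let $I'$ be the corresponding (normal) inertia subgroup. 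Since $k'/k$ is unramified at $\mathfrak{p}$, we get $I'\cap\ga(k'/k)=1$, so $I'$ embeds into $\ga(k/F)\cong\Z/2\Z$ and $|I'|\leq 2$. But $I'=1$ would make the completion $k'_{\mathfrak{p}'}/F_{\mathfrak{p}_F}$ unramified, forcing $\ga(k'/F)$ to be procyclic, contradicting non-abelianness; and $|I'|=2$ would give a normal subgroup of order $2$ in $\ga(k'/F)$, contradicting the computation just made. Hence $I=\ga(k_{\infty}^a/k)$ and $k_{\infty}^a/k$ is totally ramified at $\mathfrak{p}$.

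I do not anticipate a real obstacle. The two points that deserve a moment of care are the observation that the inversion action of $J$ genuinely descends to $\ga(k_1^a/k)$, so that $\ga(k_1^a/F)$ is the dihedral group of order $2p$, and the elementary fact that this group (with $p$ odd) has no normal subgroup of order $2$; the remaining ingredients are the standard principle that a prime with no further splitting has full decomposition group, together with basic local ramification theory.
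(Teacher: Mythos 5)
Your proof is correct, but it follows a genuinely different route from the paper's. The paper argues by class field theory: if $k_1^a/k$ were unramified, then $k_1^a\subseteq L_k$ and the Artin map gives a surjection of $\lr{J}$-modules $A_k\to \ga(k_1^a/k)$ on which $J$ acts as $-1$; since $J(\f{p})=\f{p}$ and $\ga(k_1^a/k)$ has odd order, the Artin symbol of $\f{p}$ must be trivial, so $\f{p}$ splits completely in $k_1^a/k$, contradicting the non-splitting hypothesis. You instead work entirely with decomposition and inertia groups in the dihedral extension $k_1^a/F$: non-splitting forces the decomposition group at the unique prime to be all of $\ga(k_1^a/F)\simeq D_p$, and if $k_1^a/k$ were unramified there the (normal-in-the-decomposition-group) inertia subgroup would have order $1$ or $2$ --- the first is impossible because an unramified local extension has cyclic Galois group while $D_p$ is non-abelian, the second because $D_p$ ($p$ odd) has no normal subgroup of order $2$. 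Both arguments exploit the same tension (the prime above $p$ is $J$-invariant, while the $\Z_p$-direction is inverted by $J$), but yours avoids class field theory and the class group altogether, at the modest cost of a slightly longer case analysis; the paper's is shorter once the Artin isomorphism of $\lr{J}$-modules is in hand. All the steps you flag (the descent of the inversion action to $\ga(k_1^a/k)$, the absence of normal order-$2$ subgroups in $D_p$, and the full decomposition group at a non-split prime) check out.
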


\begin{proof}
Suppose that $k_1^a/k$ is an unramified extension. 
By class field theory, 
the Artin map induces an isomorphism $A_k\simeq \ga(L_k/k)$ as $\lr{J}$-modules. 
Since $k_1^a$ is a subfield of $L_k$ and $J$ acts on $\ga(k_1^a/k)$ as $-1$, 
there is a surjective map $A_k^-\to \ga(k_1^a/k)$. 
By our assumption that the prime ${\frak p}$ of $k$ above $p$ does not split in 
$k_{\infty}^a/\Q$, 
${\frak p}$ is inert in $k_1^a/k$. 
Also, 
it holds that $J({\frak p})={\frak p}$. 
This implies that the Artin symbol of ${\frak p}$ in $k_1^a/k$ is trivial, 
thus ${\frak p}$ splits in $k_1^a/k$. 
This is a contradiction. 
Hence $k_{\infty}^a/k$ is totally ramified at the unique prime above $p$. 
\end{proof}

\section{Proof of Theorem \ref{1}}  

From here to end of this article, 
let $p$ be an odd prime number. 
In this section we show a somewhat general result which leads the assertion of Theorem \ref{1}. 
Let $F/\Q$ be a finite extension, 
$k/F$ a finite Galois extension 
and put $\Delta=\ga(k/F)$. 
Let $K/k$ be a $\Z_p$-extension such that $K/F$ is a Galois extension 
and put $\Gamma=\ga(K/k)$. 
Let $\gamma$ be a topological generator of $\Gamma$. 
Then there is $\chi\in {\rm Hom}(\Delta,\mu_{p-1})$ 
such that $\delta_1\gamma\delta_1^{-1}=\gamma^{\chi(\delta)}$ 
for each $\delta\in\Delta$ and an extension $\delta_1\in \ga(K/F)$ of $\delta$. 
Indeed, 
since ${\rm Aut}(\Z_p)=\Z_p^{\times}=\mu_{p-1}\times (1+p\Z_p)$ 
and $\Delta$ is finite, 
there is $\chi\in {\rm Hom}(\Delta,\mu_{p-1})$ such that 
$\Delta$ acts on $\Gamma$ as $\chi$. 

\begin{thm}\label{3}
Let the notations be as above. 
Let $d$ be the order of a finite cyclic group $\chi(\Delta)$. 
Let $s$ be the multiplicity of $T$ of the characteristic polynomial, 
lies in $\Z_p[T]$, 
with respect to the linear map $\gamma-1$ on $\Q_p\otimes_{\Z_p}X_K$. 
Then we have $\lambda\equiv s\bmod{d}$. 
\end{thm}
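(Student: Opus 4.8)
The plan is to follow the pattern of the proof of Lemma \ref{lem2}, but to work on the $\Q_p$-vector space $V:=\Q_p\otimes_{\Z_p}X_K$ --- which has dimension $\lambda$ by the formula recalled in the introduction --- rather than on a $\Z_p$-lattice, since now the eigenvalue $1$ is allowed to occur and must be isolated.

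First I would construct the relevant semilinear automorphism. The group $\chi(\Delta)\suq\mu_{p-1}$ is cyclic of order $d$; choose $\delta_0\in\Delta$ with $\zeta:=\chi(\delta_0)$ a generator and a lift $g\in\ga(K/F)$ of $\delta_0$, so that $g\gamma g^{-1}=\gamma^{\zeta}$ in $\ga(K/F)$ (the element $\gamma^{\zeta}$ of $\Gamma\simeq\Z_p$ makes sense). Since $L_K/F$ is a Galois extension, lift $g$ further to $\tilde g\in\ga(L_K/F)$ and let $\psi\colon X_K\to X_K$ be conjugation by $\tilde g$; this is independent of the chosen lift because $X_K$ is abelian, and it is a continuous automorphism of the $\Z_p$-module $X_K$. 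Writing the module action multiplicatively, so that $\gamma$ acts on $X_K$ as ``multiplication by $1+T$'' and $\gamma^{\zeta}$ as ``multiplication by $(1+T)^{\zeta}$'' --- the latter legitimate because $\Gamma$ is pro-$p$, exactly as in Lemma \ref{lem2} --- a direct computation with lifts gives $\psi(\gamma\cdot y)=\gamma^{\zeta}\cdot\psi(y)$ for all $y\in X_K$; here the one subtle point is that the ``error'' element of $X_K$ arising because $\tilde g\tilde\gamma\tilde g^{-1}$ need not literally equal $\tilde\gamma^{\zeta}$ is killed by the commutativity of $X_K$. Tensoring with $\Q_p$ yields a $\Q_p$-linear automorphism $\psi$ of $V$ with $\psi\gamma\psi^{-1}=\gamma^{\zeta}$.

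Next I would carry out the eigenvalue bookkeeping over $\ov{\Q_p}$. Since $\psi$ conjugates $\gamma$ into $\gamma^{\zeta}$, these two operators on $V$ have the same characteristic polynomial. By the structure theorem for Iwasawa modules, the characteristic polynomial of $\gamma-1$ on $V$ is a distinguished polynomial; in particular its roots are topologically nilpotent, so every eigenvalue $\mu$ of $\gamma$ on $\ov{\Q_p}\otimes_{\Q_p}V$ is a $1$-unit. On eigenvalues, $\gamma^{\zeta}$ carries the generalized $\mu$-eigenspace of $\gamma$ to the generalized $\mu^{\zeta}$-eigenspace of $\gamma^{\zeta}$ of the same dimension, and $\mu\mapsto\mu^{\zeta}$ is injective on $1$-units (if $\mu^{\zeta}=1$ then $\mu$ is a $p$-power root of unity, hence $\mu=1$). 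Comparing the two characteristic polynomials therefore shows that $\mu\mapsto\mu^{\zeta}$ permutes the eigenvalues of $\gamma$ while preserving multiplicities. The eigenvalue $1$ is fixed and has multiplicity $s$ by definition of $s$. If $\mu\neq 1$ is an eigenvalue and $\mu^{\zeta^{c}}=\mu$, then $\mu^{\zeta^{c}-1}=1$; since $\mu$ is a $1$-unit and $\zeta^{c}-1\in\Z_p^{\times}$ whenever $\zeta^{c}\neq 1$, this forces $\zeta^{c}=1$. Thus $\lr{\zeta}$, cyclic of order $d$, acts freely on the multiset of eigenvalues $\neq 1$, whence $d$ divides $\lambda-s$, i.e.\ $\lambda\equiv s\bmod d$.

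The main obstacle is the first step: one must check that conjugation by $\tilde g$ is well defined independently of the lift, that $\gamma^{\zeta}$ is a legitimate operator on $X_K$ (using that $\Gamma$ is pro-$p$, as in Lemma \ref{lem2}), and that the identity $\psi\gamma\psi^{-1}=\gamma^{\zeta}$ survives the passage from $\ga(K/F)$ to $\ga(L_K/F)$. Everything after that is the eigenvalue combinatorics, which is essentially the argument in the proof of Lemma \ref{lem2} with the fixed eigenvalue $1$ removed; the one external input I would simply cite is that the characteristic polynomial of $\gamma-1$ on $V$ is a distinguished polynomial, which is what makes the ``$1$-unit'' property --- and hence freeness of the $\lr{\zeta}$-action on the non-trivial eigenvalues --- available.
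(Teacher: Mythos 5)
Your proof is correct, but it takes a noticeably different route from the paper's. The paper first kills the $\Z_p$-torsion of $X_K$, then (by a descending induction on the multiplicity of $T$) constructs an auxiliary intermediate field $M$ of $L_K/K$ so that $X=\ga(M/K)$ is $\Z_p$-free of rank $\lambda-s$ with $X^{\Gamma}=0$, invokes Lemma 3 to realize $\ga(M/k)$ as the semidirect product $X\rtimes\Gamma$, and then applies Lemma 2 verbatim to the conjugation automorphism $\psi$. You instead bypass the construction of $M$ and the semidirect-product formalism entirely: you establish the semilinear relation $\psi\gamma\psi^{-1}=\gamma^{\zeta}$ directly on $X_K$ (correctly observing that the error term $z\in X_K$ in $\tilde g\tilde\gamma\tilde g^{-1}=\tilde\gamma^{\zeta}z$ is absorbed by commutativity of $X_K$ --- this is exactly the role of $z_{\delta}$ in the paper), pass to $V=\Q_p\otimes_{\Z_p}X_K$, and rerun the spectral combinatorics of Lemma 2 with the eigenvalue $1$ kept in place and counted with multiplicity $s$, using that the characteristic polynomial of $\gamma-1$ on $V$ is distinguished to see that all eigenvalues of $\gamma$ are $1$-units and that $\lr{\zeta}$ acts freely on the eigenvalues different from $1$. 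The core engine (the permutation $\eta\mapsto\eta^{\zeta}$ of the spectrum preserving multiplicities, with orbits of exact length $d$ off the fixed eigenvalue) is identical to the paper's Lemma 2; what your packaging buys is that you never need Lemma 3 (whose hypothesis of a totally ramified prime is not explicit in the setting of Theorem 3) nor the auxiliary Galois field $M$, at the cost of redoing rather than quoting the eigenvalue argument. One point to keep explicit if you write this up: the step ``$\eta^{a}=1$ with $a\in\Z_p^{\times}$ forces $\eta=1$'' needs the observation that a torsion $1$-unit is a $p$-power root of unity, which you do note in passing and which is the same observation made in the paper's proof of Lemma 2.
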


\begin{proof}
It holds that $X_K$ is a $\ga(K/F)$-module. 
Thus the $\Z_p$-torsion submodule ${\rm Tor}_{\Z_p}X_{K}$ 
of $X_K$ is also a $\ga(K/F)$-submodule. 
This shows that the fixed field of ${\rm Tor}_{\Z_p}X_{K}$ is a Galois extension over $F$. 
Since $\lambda =\dim_{\Q_p}\Q_p\otimes_{\Z_p}X_K$, 
we may assume that $X_K$ is a free $\Z_p$-module of rank $\lambda$. 

For each element $\delta\in \Delta$, 
we denote by $\tilde{\delta}\in \ga(L_K/F)$ an extension of $\delta$. 
Also, 
let $\tilde{\gamma}\in \ga(L_K/k)$ be an extension of $\gamma$. 
Since $\Delta$ acts on $\Gamma$ as $\chi$, 
there is $z_{\delta}\in X$ such that $\tilde{\delta}\tilde{\gamma}\tilde{\delta}^{-1}
=\tilde{\gamma}^{\chi(\delta)}z_{\delta}$. 
Also, 
each element $g\in \ga(L_K/F)$ can be written as 
$g=y\tilde{\gamma}^{a}\tilde{\delta}$ for some $y\in X_K$, 
$a\in \Z_p$ and $\delta\in \Delta$. 
Let $x\in X_K^{\Gamma}$. 
Then 
\begin{align*}
\tilde{\gamma}(g^{-1}xg)\tilde{\gamma}^{-1}
&
=
\tilde{\gamma}(\tilde{\delta}^{-1}\tilde{\gamma}^{-a}y^{-1}xy
\tilde{\gamma}^{a}\tilde{\delta}
)
\tilde{\gamma}^{-1}
\\
&
=
\tilde{\delta}^{-1}\tilde{\delta}
\tilde{\gamma}\tilde{\delta}^{-1}\tilde{\gamma}^{-a}x
\tilde{\gamma}^{a}\tilde{\delta}
\tilde{\gamma}^{-1}
\tilde{\delta}^{-1}\tilde{\delta}
\\
&
=
\tilde{\delta}^{-1}
\tilde{\gamma}^{\chi(\delta)}
z_{\delta}
x
z_{\delta}^{-1}
\tilde{\gamma}^{-\chi(\delta)}
\tilde{\delta}
\\
&
=
\tilde{\delta}^{-1}
\tilde{\gamma}^{\chi(\delta)}
x
\tilde{\gamma}^{-\chi(\delta)}
\tilde{\delta}
\\
&
=
\tilde{\delta}^{-1}
x
\tilde{\delta}
\\
&
=
\tilde{\delta}^{-1}\tilde{\gamma}^{-a}y^{-1}xy
\tilde{\gamma}^{a}\tilde{\delta}
\\
&
=
g^{-1}xg,
\end{align*}
and hence $X_K^{\Gamma}$ is a closed normal subgroup of $\ga(L_K/F)$. 
Suppose that $s>0$. 
By a standard argument of linear algebra, 
the multiplicity of $T$ of the characteristic polynomial of the linear map 
$\gamma-1$ on 
$
X_K/X_K^{\Gamma}
\simeq 
(\gamma-1)X_K
\subseteq X_K
$ 
is less than $s$.  
Thus, 
by repeating the same argument, 
we can find an intermediate field $M$ of $L_K/K$ such that 
$M/F$ is a Galois extension with the property that $\ga(M/K)$ is a free $\Z_p$-module, 
and that the characteristic polynomial of $\gamma-1$ 
on $\ga(L_K/M)$ is $T^s$. 
Put $X=\ga(M/K)$. 
Then it holds that $X^{\Gamma}=0$ and that $X\simeq \Z_p^{\lambda-s}$. 
By lemma \ref{lem3}, 
we have $\ga(M/k)\simeq X\rtimes \Gamma$. 
Let $\delta\in \Delta$ be an element such that the order of $\zeta=\chi(\delta)$ is $d$. 
Let $\tilde{\delta}\in \ga(M/F)$ be an extension of $\delta$ and 
$\psi$ be an automorphism of $\ga(M/k)$ defined by $\psi(g)=\tilde{\delta}g\tilde{\delta}^{-1}$. 
Since $\Delta$ acts on $\Gamma$ as $\chi$, 
there is $x\in X$ such that $\psi(\tilde{\gamma})=x\tilde{\gamma}^{\chi(\delta)}
=
x\tilde{\gamma}^{\zeta}$. 
By lemma \ref{lem2}, 
we have $\lambda -s \equiv 0\bmod{d}$. 
This completes the proof. 
\end{proof}

We show Theorem \ref{1}. 
Let $k$ be a quadratic extension of a finite extension $F/\Q$. 
Let $k_{\infty}^a/k$ be an anti-cyclotomic like $\Z_p$-extension with respect to $k/F$. 
Since $k_{\infty}^a/k$ is totally ramified at the unique prime above $p$ by lemma \ref{lem4}, 
it holds that $A_k\simeq X_{k_{\infty}^a}/(\gamma-1)X_{k_{\infty}^a}$. 
Thus we have $s=0$. 
The fields $k$, 
$F$ and $k_{\infty}^a$ satisfy the conditions of Theorem \ref{3} with $d=[k:F]=2$, 
and therefore we have $\lambda\equiv 0 \bmod{2}$. 
\qed

\section{
Proof of Theorem \ref{2}
}

Let $k$ be a CM-field and $k^+$ the totally real subfield of $k$. 
Let $J$ be the generator of $\ga(k/k^+)$. 
Let $k_{\infty}^a/k$ be an anti-cyclotomic like $\Z_p$-extension with respect to $k/k^+$. 
We remark that $k_{\infty}^a/k$ is totally ramified at the unique prime above $p$. 
For each non-negative integer $n$, 
put $A_n=A_{k_n^a}$. 
Since norm maps $A_m\to A_n$ for each pair $m$ and $n$ of non-negative 
integers with $m\geq n$ are surjective, 
it suffices to show that $A_1$ is not cyclic. 
If $A_0$ is not cyclic then $A_1$ is also not cyclic. 
Suppose that $A_0$ is a non-trivial cyclic group. 
Suppose further that $A_{k^+}=0$. 
Then one has $A_0=A_0^-$. 
Put $G_1=\ga(k_1^a/k)\simeq \Z/p\Z$. 
Following the method of the proof of Theorem $6.1$ of \cite{Kundu-Washington}, 
we show here that $\# A_0 <\# A_1$. 
Let $\tau$ be a generator of $G_1$. 
By lemma \ref{lem3}, 
it holds that 
$$
\# A_0=\# A_1/(\tau-1)A_1=\# A_1^{G_1}.
$$
Suppose that $\# A_0= \# A_1$. 
Then we have $A_1^{G_1}=A_1$, 
and hence $\ga(k/k^+)=\lr{J}$ acts on $A_1$ canonically. 
It holds that the norm map $A_1^-\to A_0^-=A_0$ is surjective. 
Further since it holds that 
$$
\# A_0 =\# A_0^-\leq \# A_1^- \leq \# A_1 =\# A_0,
$$
we have $A_1^-=A_1$. 

For $i=0$ or $1$, 
let $I_i$, 
$P_i$, 
$C_i$ and 
$E_i$ be the ideal group, 
the principal ideal group, 
the ideal class group and the unit group of $k_i^a$. 
From the exact sequence 
$$
0\to P_1\to I_1 \to C_1\to 0,
$$
we have an exact sequence 
$$
I_1^{G_1}\to C_1^{G_1}\to \hat{H}^{-1}(G_1,P_1)\to 0
$$
of abelian groups. 
One also sees that 
$$
(I_1^{G_1})_p^-\to A_1^-\to \hat{H}^{-1}(G_1,P_1)_p^-\to 0
$$
is exact since $\Z_p\otimes_{\Z} C_1^{G_1}=A_1^{G_1}=A_1$. 
From the exact sequence
$$
0\to E_1\to (k_1^a)^{\times}\to P_1 \to 0,
$$
we have an exact sequence 
$$
0\to \hat{H}^{-1}(G_1,P_1)\to \hat{H}^0(G_1,E_1).
$$
Let $\mu(k)$ be the group of all roots of unity of $k$, 
and let $E_0^+$ be the unit group of $k^+$. 
Then, 
as Hasse's unit index, 
it is known that $[E_0:\mu(k)E_0^+]=1$ or $2$. 
From assumptions that $p$ is odd and $k$ contains no primitive $p$th roots of unity,  
it follows that
$$
(E_0)_p^-
=
(\mu(k)E_0^+/E_0^+)_p^-
=
0.
$$
Since $\hat{H}^0(G_1,E_1)_p^-$ is a quotient of $(E_0)_p^-$, 
it holds that $\hat{H}^0(G_1,E_1)_p^-=0$. 
Hence we have 
$
\hat{H}^{-1}(G_1,P_1)_p^-=0
$, 
and then it turns out that $(I_1^{G_1})_p^-\to A_1^-$ is surjective. 
Let ${\frak p}$ be the prime of $k$ above $p$, 
and ${\frak p}_1$ be the prime of $k_1^a$ above ${\frak p}$. 
It holds that $I^{G_1}=I_0\lr{{\frak p}_1}$. 
There is the following exact sequence
$$
0\to I_0\to I_1^{G_1} \to \lr{{\frak p}_1}I_0/I_0\to 0.
$$
Since $J({\frak p})={\frak p}$, 
it holds that $J({\frak p}_1)={\frak p}_1$, 
and hence we have $(\lr{{\frak p}_1}I_0/I_0)_p^-=0$. 
Thus it holds that $(I_1^{G_1})_p^-=(I_0)_p^-$. 
This shows that the lifting map $A_0=A_0^-\to A_1^-=A_1$ is surjective. 
However, 
the composition of the norm map $A_1\to A_0$ and the lifting map $A_0\to A_1$ 
is the $p$th power map because of $A_1=A_1^{G_1}$, 
thus the lifting map $A_0\to A_1$ never be surjective. 
This is contradiction. 
Therefore, 
we have $\# A_0< \# A_1$. 
Hence $G_1$ acts on $A_1$ non-trivially. 

By lemma \ref{lem3}, 
we have $\ga(L_{k_1^a}/k)=A_1\rtimes G_1$. 
Let $\tilde{J}\in \ga(L_{k_1^a}/k^+)$ be an extension of $J$. 
Let $\phi$ be an automorphism of $\ga(L_{k_1^a}/k)$ 
defined by $\phi(g)=\tilde{J}g\tilde{J}^{-1}$ for $g\in \ga(L_{k_1^a}/k)$. 
Since $J$ acts as $-1$ on $\ga(k_1^a/k)$, 
it holds that $\phi(\tau)\equiv \tau^{-1}\bmod{A_1}$, 
and hence there is $y\in A_1$ such that $\phi(\tau)=y\tau^{-1}$. 
By lemma \ref{lem1}, 
we can conclude that such an automorpshism $\phi$ does not exist 
if $A_1$ is cyclic. 
Therefore, 
$A_1$ is not cyclic 
\qed

\begin{ack}
The author would like to express his thanks to Professor Manabu Ozaki for telling the article 
\cite{Carroll-Kisilevsky}, 
and for giving valuable comments about contents of this article. 
The research of this article was partly supported by 
JSPS KAKENHI Grant number 24K06669.
\end{ack}

\end{document}